\documentclass{amsart}
\usepackage[T1]{fontenc}
\usepackage[english]{babel}
 \usepackage{hyphenat}
\usepackage[utf8]{inputenc}

\usepackage{amssymb,amsmath,amsthm,mathtools}
\usepackage{srcltx}
\usepackage{babel,graphicx}
\usepackage{color}
\usepackage{hyperref}

%\usepackage{amsmath,amssymb,amsfonts} % Typical maths resource packages
%\usepackage{amsthm}
%\usepackage{graphics}                 % Packages to allow inclusion of graphics
%\usepackage{color}                    % For creating coloured text and background
%\usepackage{hyperref}                 % For creating hyperlinks in cross references
%\usepackage{graphicx,eucal}
%\usepackage{latexsym,epsfig,epic,eepic}
%\usepackage{showkeys}

%\pagestyle{myheadings}         % Option to put page headers
                               % Needed \documentclass[a4paper,twoside]{article}
%\markboth{{}}

%\textwidth 15.5cm \topmargin -0.5cm
%\parindent 0cm
%\textheight 24cm
%\parskip 1mm

\newtheorem{theorem}{Theorem}[section]
\newtheorem{proposition}[theorem]{Proposition}

\newtheorem{lemma}[theorem]{Lemma}

\theoremstyle{definition}
\newtheorem{definition}[theorem]{Definition}

\newcommand{\Q}{\mathbb{Q}}

\newcommand{\HH}{\mathbb{H}}
\newcommand{\e}{\epsilon} % Remove for better compilation.

\newcommand{\fix}{\text{Fix}}
\newcommand{\vol}{\text{Vol}}

 %generator set of Gamma
 %Haar measure on K
 %Haar measure on G
 % Haar measure on H
 %Haar measure on N
%Right invariant Haar measure on H
 %Right invariant Haar measure on M
 %Left invariant Haar measure on M
 %Right invariant Haar measure on M
 %Left invariant Haar measure on M
 %Haar measure on K_0
 % discretization measure
 %proba measure on G defining random walk
 % K-invariant proba measure on \(B\)
 %measures on the Furstenberg boundary
 %time one distribution of Brownian motion on \(G\) starting at \(e\)
 %stationary measure on X

 % leaf measure
 % minimal parabolic for the weyl chamber W
% weyl chamber W

%\date{2020}
\title[Finiteness of arithmetic reflection groups]{A new proof of finiteness of maximal arithmetic reflection groups}

\author{David Fisher and Sebastian Hurtado}

%\address{Department of Mathematics
%University of Chicago
%Chicago, Il 60615}
%\email{shurtados@uchicago.edu}
%\address{}
%
%\email{}
\subjclass{20F60, 22F50, 37B05, 37C85, 37E10, 57R30.}%

%\keywords{Left-orderable groups, Semi-simple Lie groups, Lattices}%
%\thanks{}
%\thanks{}
\thanks{D.F. was supported by NSF Grant DMS-1906107 and the Miller Institute at Berkeley. S.H. was supported by the Sloan Fellowship Foundation.}

\begin{document}
\maketitle

%\begin{abstract}
%We prove a lot.
%\end{abstract}

\section{Introduction}

In this note, we offer a new proof of the following result:

\begin{theorem} \label{main0} For every $n \geq 2$, there exists only finitely many maximal arithmetic reflection groups $\Gamma \subset \text{Isom}(\HH^n)$.
\end{theorem}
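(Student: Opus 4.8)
The plan is to reduce the statement to a single uniform geometric bound and then feed that bound into standard arithmetic finiteness. First I would record the arithmetic description of $\Gamma$: by Vinberg's arithmeticity criterion a maximal arithmetic reflection group is commensurable with the integer points of $\mathrm{SO}(f)$ for a quadratic form $f$ of signature $(n,1)$ over a totally real number field $k$ that is anisotropic at every other archimedean place, and $\Gamma$ itself is pinned down by $f$ together with the combinatorics of the Coxeter polytope $P$ that is its fundamental domain. Since $\Gamma$ is an arithmetic lattice it is contained in a maximal arithmetic lattice $\Gamma_{\max}$ of the same commensurability class, so $\vol(\HH^n/\Gamma_{\max}) \le \vol(\HH^n/\Gamma)$; by the Borel--Prasad volume formula together with Odlyzko's lower bounds on root discriminants, $\vol(\HH^n/\Gamma_{\max})$ tends to infinity as either $[k:\Q]$ or the discriminant $d_k$ grows. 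Consequently, if I can bound $\vol(\HH^n/\Gamma)$ from above uniformly over all maximal arithmetic reflection groups of the fixed dimension $n$, then $[k:\Q]$ and $d_k$ are bounded, leaving only finitely many fields $k$ and forms $f$.

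The heart of the proof is therefore a uniform upper bound on $\vol(\HH^n/\Gamma)$, and this is where I would use the dynamics of the geodesic flow and the Bowen--Margulis measure $\mGBM$. The mirrors of $P$ are totally geodesic hypersurfaces that meet along codimension-two strata at dihedral angles $\pi/m_{ij}$. The idea is to show that distinct mirrors cannot accumulate: each mirror $\Sigma$ carries an embedded collar of width bounded below by a constant $w(n) > 0$ depending only on $n$, so that these collars are essentially disjoint. Rather than deriving such a collar from the Burger--Sarnak spectral gap, I would try to obtain it dynamically, using mixing and equidistribution of $\mGBM$ under the geodesic flow to control how often a generic geodesic returns close to a mirror; a uniform collar then yields $\vol(\HH^n/\Gamma) \ge w(n)\cdot \tfrac12 \sum_i \vol(\Sigma_i)$, so that the total mirror volume is comparable to $\vol(\HH^n/\Gamma)$, while a Gauss--Bonnet / angle-deficit estimate on $P$ bounds $\vol(\HH^n/\Gamma)$ in terms of the number of mirrors and the angle data. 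Closing this loop, by inductively controlling the $(n-1)$-dimensional mirror orbifolds and the number of mirrors, is what would produce the absolute bound $\vol(\HH^n/\Gamma) \le V(n)$.

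With the volume bound in hand I would conclude by a height/Northcott argument. The walls of $P$ are cut out by $f$-orthogonal vectors whose Gram data are the algebraic numbers $2\cos(\pi/m_{ij})$ and the cross-ratios of these vectors, all lying in $\Qbar$ of degree at most $[k:\Q]$; a uniform bound on $\vol$ and on $[k:\Q]$ forces a uniform bound on their heights $\hhat$. Since there are only finitely many algebraic numbers of bounded degree and bounded height (Northcott), only finitely many Gram matrices arise, hence finitely many generating sets $\Gen$, hence finitely many groups $\Gamma$.

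The hard part will be the middle step: turning the ergodic-theoretic control of $\mGBM$ into an honest, uniform collar around the mirrors and then combining it with the angle-deficit estimate to bound the volume. A lower bound on injectivity radius alone never bounds volume, so the argument must genuinely exploit that $\Gamma$ is generated by reflections, that its fundamental domain is a finite-sided polytope with $\pi/m$ angles, and marry this rigidity to the flow dynamics. I expect the principal obstacles to be the non-compact (cusped) case, where mirrors run out to infinity and the collar estimate must be made uniform near the cusps, and the uniformity of all constants across the infinitely many groups under consideration.
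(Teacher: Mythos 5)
Your proposal is, in outline, the strategy of Agol--Belolipetsky--Storm--Whyte \cite{ABSW} --- reduce finiteness to a uniform covolume bound via Borel--Prasad and Odlyzko, and get the covolume bound from a uniform collar about the mirror locus --- but with the one deep input of that proof swapped out, and the swap is exactly where the argument breaks. For a lattice the Bowen--Margulis measure $\mGBM$ is just the Liouville measure, and mixing of the geodesic flow on each fixed quotient $\HH^n/\Gamma$ is a qualitative statement whose constants depend on $\Gamma$; you need a collar width $w(n)$ that is uniform over infinitely many commensurability classes, and an effective mixing rate with uniform constants across such a family is \emph{equivalent} to a uniform spectral gap for the family (this is precisely what Burger--Sarnak plus the automorphic bounds on $\lambda_1$ supply in \cite{ABSW}). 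So ``obtain the collar dynamically instead of from the spectral gap'' is circular: the uniformity you need \emph{is} the spectral gap. Moreover, even granting the collar, your closing loop runs the inequalities the wrong way: the collar gives $\vol(\HH^n/\Gamma) \geq w(n)\cdot\tfrac12\sum_i \vol(\Sigma_i)$, an \emph{upper} bound on mirror volume in terms of the covolume, while your Gauss--Bonnet/angle-deficit step bounds the covolume in terms of the number of mirrors --- which is not a priori bounded, and which you would bound using the covolume. (Gauss--Bonnet is also vacuous in odd dimensions, and you have flagged the cusped case yourself.) As stated, no absolute bound $V(n)$ comes out; the induction on mirror orbifolds is the entire missing content.

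For contrast, the paper never bounds the covolume at all, and needs neither maximality nor congruence. It proves Theorem \ref{main1}: the degree $d$ of the adjoint trace field is at most $d_n$, using the Arithmetic Margulis Lemma \ref{AML} (from \cite{FHR}, resting on Breuillard's height gap \cite{Breuillard}, now with an elementary proof \cite{CHL}), whose key feature is that the solvability scale $\e_n d$ \emph{grows with the degree}. The geometry is confined to a single $2$-dimensional face $F$ of the Coxeter polytope: by Vinberg's simplicity (Theorem \ref{simplicity}) and Andreev's theorem (Theorem \ref{Andreev}), reflections in three walls meeting a ball of radius $\e_n d/4$ generate a non-virtually-solvable group except in one right-angled configuration (Lemma \ref{maintool}), so half-balls of radius $\e_n d/4$ centered on the long edges sit in $F$ with bounded overlap, forcing $\vol(F)$ to grow with $d$ --- contradicting the elementary bound $\vol(F) \leq (k-2)\pi$ once $d \geq d_n$ as in \eqref{dn}. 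Finiteness of maximal groups then follows from Nikulin's older bounded-degree result \cite{Nikulin1}. If you want to rescue your approach without reintroducing automorphic forms, the non-spectral input has to enter at the level of such a degree-dependent Margulis-type lemma, not through mixing of the flow.
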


\noindent Nikulin proved in \cite{Nikulin1} that there were at most finitely many maximal arithmetic reflection groups in each dimension $n$ where the degree of the adjoint trace field is fixed.  Therefore Theorem \ref{main0} follow from bounding the degree of the number field and our main contribution is to give a new proof of the following:

\begin{theorem}\label{main1}
For every $n \geq 2$, there exists $d_n > 0$, such that if $\Gamma$ is arithmetic reflection group in $\HH^n$, then degree of the adjoint trace field of $\Gamma$ over $\Q$ is at most $d_n$.
\end{theorem}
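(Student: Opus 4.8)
The plan is to bound the degree $d = [k:\Q]$ of the adjoint trace field $k$ by playing a lower bound for the covolume, which grows with $d$, against an upper bound coming from the reflection structure, which should depend only on $n$ (or at worst grow sub-exponentially in $d$). First I would record two reductions. Since the adjoint trace field is a commensurability invariant and every arithmetic reflection group sits inside a maximal one in its commensurability class, it suffices to bound $d$ for maximal groups. Next, a reflection group is arithmetic of the simplest (quadratic form) type, so $\Gamma$ is commensurable with the integer points of $\mathrm{SO}(f)$ for a quadratic form $f$ of signature $(n,1)$ over the totally real field $k$, with $f$ definite at every archimedean place other than the identity one. If $\Gamma$ were noncompact it would contain a parabolic, forcing $f$ to be isotropic over $k$ and hence over every completion; this contradicts definiteness at the non-identity real places unless $d=1$. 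Thus for $d>1$ the group is cocompact and $f$ is anisotropic, and I may restrict to this case.

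Next I would set up the Galois/compact picture via restriction of scalars. Writing $\sigma_1 = \mathrm{id},\sigma_2,\dots,\sigma_d$ for the real embeddings of $k$, the group $\mathbf H = \mathrm{Res}_{k/\Q}\mathrm{SO}(f)$ satisfies $\mathbf H(\R) \cong \mathrm{SO}(n,1)\times K$ with $K = \prod_{i\ge 2}\mathrm{SO}(n+1)$ compact, and $\Gamma$ embeds as a lattice whose projection to $\mathrm{SO}(n,1)$ is the reflection group and whose projection to $K$ is dense (Borel density together with $\Q$-simplicity). The key algebraic constraint is then that every inner product of wall normals, every trace, and every eigenvalue of $\Gamma$ that is hyperbolic at place $1$ becomes, under each $\sigma_i$ with $i\ge 2$, a quantity of compact/Euclidean type: conjugates of $2\cosh(\text{length})$ lie in $[-2,2]$ and conjugates of $\cosh(\text{distance between walls})$ lie in $[-1,1]$. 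Equivalently, eigenvalues of hyperbolic elements are Salem numbers, and the off-diagonal Gram entries $f(v_\alpha,v_\beta)$ of the simple roots are totally real integers whose non-identity conjugates are confined to $[-1,1]$.

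For the lower bound I would invoke the Borel--Prasad volume formula, which gives $\vol(\HH^n/\Gamma)\ge c(n)\,|\mathfrak d_k|^{b(n)}$ with $b(n)>0$; since the discriminant of a totally real field of degree $d$ satisfies $|\mathfrak d_k|\ge A^{d}$ for a fixed $A>1$ (Minkowski/Odlyzko), the covolume grows at least exponentially in $d$. The crux is then an upper bound $\vol(\HH^n/\Gamma)\le C(n)$, i.e. a bound on the volume of the Coxeter polytope depending only on $n$; comparing the two yields $d\le d_n$. This is where I expect the main difficulty and where a dynamical argument should enter, since the volume is controlled by the facet count and the angle data, and one wants to bound the number of simple roots $v_\alpha$ using that $(\sigma_i(v_\alpha))_{i\ge 2}$ is a configuration of unit vectors on $\prod_{i\ge 2}S^n$ obeying the Gram constraints above. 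The obstacle is that this configuration space has dimension $n(d-1)$, growing linearly in $d$, so a naive metric pigeonhole on the spheres only produces a short closed geodesic — equivalently a Salem number close to $1$ of degree at most $2d$ — and such numbers exist in every degree, so the exponents exactly balance and yield no bound on $d$.

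Hence the real work is to replace the metric pigeonhole by an argument that simultaneously exploits the rigidity of the angles $\pi/m$ and the integrality of the Gram matrix at all places at once: for instance, using equidistribution of the root configuration in $\prod_{i\ge 2}S^n$ (a consequence of the density of $\Gamma$ in $K$ together with mixing) to force either a sub-exponential-in-$d$ bound on the facet count or a direct violation of the integrality of $N_{k/\Q}\!\left(f(v_\alpha,v_\beta)^2-1\right)$. Establishing this equidistribution-with-rigidity input and extracting from it a covolume bound independent of $d$ is the step I expect to be the main obstacle; everything else is either classical number theory or the structure theory of Coxeter polytopes, and the finiteness of Theorem \ref{main0} then follows by combining the degree bound with Nikulin's theorem.
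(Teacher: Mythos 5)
Your reductions (passing to a maximal group, simplest type via Vinberg, $d>1$ forces cocompactness, Borel--Prasad plus Minkowski giving $\vol(\HH^n/\Gamma)\geq c(n)A^{d}$) are all correct, but the proposal has a genuine hole exactly at its crux, and you say so yourself: the uniform upper bound $\vol(\HH^n/\Gamma)\leq C(n)$ is not a technical step to be filled in later --- it \emph{is} the theorem, in the sense that this is precisely where all previously known proofs (Agol; Agol--Belolipetsky--Storm--Whyte; and in effect Nikulin) invoke the deep input: maximality forces the group to be congruence, and the spectral gap $\lambda_1\geq\epsilon_n$ from the theory of automorphic forms is what bounds the volume of a reflective polytope independently of $d$. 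Your proposed substitute --- equidistribution of the Galois-conjugate root configuration in $\prod_{i\geq 2}S^n$ forcing a violation of integrality of $N_{k/\Q}\bigl(f(v_\alpha,v_\beta)^2-1\bigr)$ --- is a hope, not an argument: density of the projection to the compact factor gives no quantitative equidistribution of the finite set of simple roots, and your own Salem-number computation already shows that the per-place metric constraints scale exactly with $d$ and cancel against the exponential discriminant growth. So as written the proposal reconstructs the skeleton of the known spectral proof with its key lemma removed and nothing put in its place.

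The paper's route avoids any global covolume bound altogether, which is worth internalizing because it explains why your balancing act is unnecessary. The Arithmetic Margulis Lemma (Lemma \ref{AML}, from \cite{FHR} via Breuillard's height gap) says the virtually solvable radius at every point is at least $\epsilon_n d$, i.e.\ it converts large degree directly into a large-scale local geometric constraint, with no volume formula and no congruence or maximality hypothesis. One then works entirely inside a single $2$-dimensional face $F$ of the Coxeter polytope: Lemma \ref{maintool} (using simplicity of acute-angled polytopes and Andreev's theorem) shows three walls meeting a ball of radius $\epsilon_n d/4$ centered on an edge generate a non-virtually-solvable group outside one right-angled exceptional pattern, so half-balls of hyperbolic radius $\epsilon_n d/4$ can be packed along the long edges of $F$ with bounded multiplicity. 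This gives $3\vol(F)\geq \frac{m}{2}\cdot\frac{1}{2}\vol\bigl(B_{\epsilon_n d/4}\bigr)$ against the Gauss--Bonnet bound $\vol(F)\leq(m-2)\pi$; the edge count $m$ \emph{cancels}, leaving $\vol\bigl(B_{\epsilon_n d/4}\bigr)\leq 12\pi$ and hence $d\leq d_n$. The cancellation of $m$ is the structural point your approach lacks: the paper compares a per-edge area contribution that grows exponentially in $d$ with a per-edge Gauss--Bonnet budget of $\pi$, so no bound on the facet count or on the total volume is ever needed.
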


\noindent The fact that Theorem \ref{main0} implies finiteness of all maximal arithmetic reflection groups
can be deduced in a variety of ways from prior work of Nikulin, Prokhorov and Vinberg, see the introduction
to \cite{ABSW} or the survey \cite{BelolipetskySurvey}.  We note here that we do not use that the reflection
group is maximal in our proof of Theorem \ref{main1} nor do we need the fact that maximal arithmetic reflection
groups are congruence.

We briefly discuss the prior history of finitenes of maximal arithmetic reflection groups, referring the reader to Belolipetsky's excellent survey for more details on this and other related points \cite{BelolipetskySurvey}.  In \cite{LMR}, Long, Maclachlan and Reid prove finiteness of arithmetic surfaces of genus zero, which implies the desired result.  Shortly afterwards, Agol proved finiteness of maximal arithmetic Kleinian reflection groups in \cite{Agol}.  Following that work there were two independent proofs of finiteness in higher dimensions, one by Nikulin and one by Agol, Belolipetsky, Storm and Whyte \cite{Nikulin2, ABSW}.  Roughly \cite{ABSW} generalize the proofs of \cite{Agol, LMR} to higher dimensions using some new inputs, while \cite{Nikulin2} uses older reflection group technology from Nikulin's prior work on finiteness to prove the case of general dimension by induction on the results in dimension $2$ and $3$.  In either case, the proofs relies in the end in a central way on deep results in the theory of automorphic forms that provide an absolute lower bound on the first eigenvalue of the Laplacian on the relevant orbifolds. Our main motivation in writing this note is to give a proof that is independent of the spectral bounds and the theory of automorphic forms.

Our main tool will be the following:

\begin{lemma}[Arithmetic Margulis Lemma]\label{AML}
For every $n > 0$, there exists $\epsilon_n$ such that if $\Gamma \subset \text{Isom}(\HH^n)$ is an arithmetic group whose trace field has degree $d$ over $\Q$,
then for every $x \in \HH^n$, the group: $$\Gamma_x := \langle \{ \gamma \in \Gamma | d_{\HH^n}(\gamma x, x ) \leq \epsilon_n d \} \rangle$$ is virtually solvable.
\end{lemma}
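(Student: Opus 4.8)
The plan is to exploit the arithmeticity of $\Gamma$ through the Galois embedding into a product of real forms, and then run a quantitative Zassenhaus-type argument in that product. Concretely, since an arithmetic reflection group in $\HH^n$ is of simplest type, I would fix a totally real field $K$ with $[K:\Q]=d$ and a quadratic form $q$ of signature $(n,1)$ at the identity place $v_0$ and definite at every other archimedean place, with $\Gamma \subset \mathbf{SO}(q)(\Ocal_K)$ (up to finite index and commensurability). Applying every Galois embedding $\sigma_v : K \to \R$ gives a diagonal map $\iota : \Gamma \to \tilde G := \prod_v \mathbf{SO}(q^{\sigma_v})(\R) = G \times C$, where $G = \mathbf{SO}(n,1)(\R)$ and $C$ is a \emph{compact} group of dimension $(d-1)\binom{n+1}{2}$, and $\iota(\Gamma)$ is discrete. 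The decisive structural feature is that $\dim \tilde G = d\binom{n+1}{2}$ grows only \emph{linearly} in $d$, while every simple factor of the Lie algebra $\widetilde{\lieg} = \lieg \oplus \liec$ is a fixed real form of $\mathfrak{so}(n+1,\C)$; hence the structure constants are uniformly bounded independent of $d$, which is what lets the linear dependence on $d$ survive.

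Next I would establish an effective Zassenhaus lemma in $\tilde G$, tracking the size of the Zassenhaus neighborhood $\tilde U$ explicitly in terms of $\dim \tilde G$ via the Baker--Campbell--Hausdorff series. The key bookkeeping is that, because the factors are isometric copies of a single type, membership of $\iota(\gamma)$ in $\tilde U$ should be governed by the \emph{summed} displacement $D(\gamma) = \sum_v d_v\big(\iota(\gamma)\tilde x_v,\,\tilde x_v\big)$ rather than by any single place, so that a total budget of size $\epsilon_0$ is distributed across the $d$ factors. Since a discrete subgroup of $\tilde G$ generated by elements of $\tilde U$ is nilpotent, the entire difficulty is reduced to the claim that the generating set $S = \{\gamma \in \Gamma : d_{\HH^n}(\gamma x, x) \le \epsilon_n d\}$ maps into a single such neighborhood (possibly after passing to a finite-index subgroup).

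This comparison across places is the main obstacle, and it is where the arithmetic input is indispensable. The hypothesis controls only the $v_0$-displacement, by $\epsilon_n d$, whereas the conjugate displacements in the compact factor $C$ are a priori $O(1)$ and spread out — a reflection, for instance, has conjugates that are order-two rotations bounded away from the identity in $C$. To control them I would use the product formula: for $\gamma \neq 1$ a suitable invariant (a matrix entry, the quantity $q(\gamma v - v)$ for a fixed lattice vector $v$, or a trace) is a nonzero element of $\Ocal_K$, so the product of its absolute values over all archimedean places is at least $1$. Combined with definiteness at the places $v \neq v_0$, this forces a quantitative balance relating $d_{v_0}(\gamma)$ to the collection $\{d_v(\gamma)\}_{v \neq v_0}$, which I expect to convert the single-place bound $d_{v_0}(\gamma) \le \epsilon_n d$ (for $\epsilon_n$ small enough depending only on $n$) into control of $D(\gamma)$, placing $\iota(\gamma)$ inside $\tilde U$. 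Getting the arithmetic of heights to produce exactly a linear-in-$d$ threshold, rather than the inverse-polynomial thresholds that naive commutator or eigenvalue-gap estimates yield, is the delicate point on which the whole argument turns.

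Finally, the weakening of the conclusion from virtually nilpotent to merely \emph{virtually solvable} reflects precisely this discrepancy between the $v_0$-displacement hypothesis and the full $\tilde G$-geometry: where the product-formula comparison only controls a finite-index subgroup, or where the compact parts can be tamed only after a pigeonhole over $C$, one recovers nilpotence on a finite-index subgroup and hence solvability of $\Gamma_x$. As a fallback if the direct Zassenhaus step proves too rigid, I would instead assume $\Gamma_x$ is not virtually solvable, invoke the Tits alternative to produce a hyperbolic element given by a bounded-length word in $S$, and derive a contradiction from the same height bound applied to its eigenvalue; the careful choice of which arithmetic invariant to estimate is again the crux.
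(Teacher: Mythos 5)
The paper does not prove Lemma \ref{AML} at all: it is quoted from \cite{FHR}, where it is deduced from Breuillard's height gap theorem \cite{Breuillard}. Measured against that route, your proposal has a genuine gap at its central step. The claim that the generating set $S$ maps into a single Zassenhaus neighborhood $\tilde U \subset \tilde G$ (even after passing to a finite-index subgroup) is false, and the product formula cannot repair it. As you yourself observe, a reflection $\gamma \in S$ has Galois conjugates that are involutions in the compact factors, hence at distance bounded \emph{below} from the identity at each of the $d-1$ compact places; so your summed displacement satisfies $D(\gamma) \gtrsim d$, which is enormous rather than small, and no finite-index reduction helps, since the groups $\Gamma_x$ arising in the application are generated by exactly such reflections. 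Moreover, the product formula pushes in precisely the wrong direction for your purposes: it gives a \emph{lower} bound on the product of the absolute values of a nonzero element of $\Ocal_K$, i.e., smallness at $v_0$ forces largeness at the remaining places. It can never convert the hypothesis $d_{v_0}(\gamma)\le \epsilon_n d$ into smallness of the conjugate displacements, which is what any BCH/Zassenhaus neighborhood argument requires.

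The mechanism that actually produces the linear-in-$d$ threshold — the ``delicate point'' you flag but do not resolve — is different. One works with the normalized limit height $\hhat(F) = \lim_k h(F^k)/k$ of the generating set, where $h$ carries a factor $1/d$ from averaging over the $d$ archimedean places. The compact places contribute $O(1)$ to $h(F^k)$ uniformly in $k$ (matrix norms stay bounded in a compact group), hence contribute $0$ to the limit $\hhat$; the place $v_0$ contributes at most a constant times $(\text{displacement at } x)/d \le \epsilon_n$ (up to constants depending on $n$). Breuillard's theorem asserts that a finite subset of $\GL_m(\Qbar)$ generating a non-virtually-solvable group has $\hhat$ bounded below by a constant depending only on $m$, so taking $\epsilon_n$ below that constant yields the lemma; this also explains why virtual solvability, not nilpotence, is the natural conclusion — it is what the height gap detects, not an artifact of finite-index losses as your last paragraph suggests. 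Your fallback (Tits alternative plus a bounded-length word whose eigenvalue violates a height estimate) is in effect an attempt to reprove the height gap theorem itself, which is a deep result; an elementary proof exists \cite{CHL}, but it is not obtainable by a short pigeonhole argument. A smaller point: the lemma concerns arbitrary arithmetic subgroups of $\text{Isom}(\HH^n)$, so your reduction to groups of simplest type would need justification — it suffices for the paper's application to reflection groups, but not for the lemma as stated.
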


\noindent This lemma is proven in \cite{FHR} using Breuillard's height gap theorem from \cite{Breuillard}.  Breuillard's
proof of the height gap theorem is also far from elementary, but a recent elementary proof was obtained by the second author with Chen and Lee \cite{CHL}.  In particular, our proof of Theorem \ref{main0} does not rely on heavy machinery from number theory or representation theory in any way.  We thank Ian Agol for suggesting that it might be interesting to find such a proof.

We mention briefly here that once Theorem \ref{main1} was known, further work has been done on explicitly computing $d_n$. For a fairly up to date account of these developments see \cite[Section 5]{BelolipetskySurvey}.  Only $d_2$ has been computed exactly in more recent work of Linowitz \cite{Linowitz}.  To achieve anything in this direction by our methods would require an effective version of Lemma \ref{AML} and so an effective version of the height gap theorem.  This seems quite difficult in general, but could perhaps be easier in the explicit settings required for the study of hyperbolic reflection groups.

We finish this introduction by giving an outline of the Proof of Theorem \ref{main1} in the case $n=2$. Assume $\Gamma$ is an arithmetic reflection group generated by reflections on the sides of an acute-angled polygon $P$ in $\HH^2$.  Let $\mathcal{E} = \{e_1, e_2, \dots , e_k\}$ be the edges of $P$ and consider the collection of balls $ \mathcal{B} = \{B_1, B_2, \dots , B_k\}$ of radius $\frac{\e_2 d}{2}$ centered at the midpoints of the edges of $P$,  where $\e_2$ is the constant in \ref{AML} and $d$ is the degree of the trace field of $\Gamma$.

If a ball $B_i \in \mathcal{B}$ intersects three edges (or intesect two other balls in $\mathcal{B}$) , the group generated by reflections on these three sides will be typically a non-virtually solvable group\footnote{unless the three edges are adjacent and meet at right angles, a problematic case that requires further considerations}, therefore if $d$ is sufficiently large, Lemma \ref{AML} implies the balls in $B$ typically cannot have triple intersections and intersect at most two edges of $P$. To illustrate the argument, assume there are no triple intersections of balls of $\mathcal{B}$ and that each ball in $\mathcal{B}$ intersects only one edge, and so half of each ball is contained in $P$. This implies that the area of $P$ has to be greater than $\frac{k}{4} \vol(B_{\HH^2}(\e_2d))$, but elementary hyperbolic geometry shows that $P$ has volume at most $(k-2)\pi$. If $d$ is sufficiently large both inequalities are not possible. In the case $n \geq 3$, we will apply a similar argument to a two-dimensional face of $P$, in this case we will also need to make use of the simplicity of such polyhedra due to Vinberg and a Theorem of Andreev.

\noindent
{\bf Remark:} Very shortly after we shared a draft of this paper, Jean Raimbault replied
with an even shorter proof of Theorem \ref{main0}.  Raimbault's proof uses \cite[Theorem D]{FHR}
and a remarkable new result: for any reflection hyperbolic manifold in dimension n, the thin part
has at least a fixed proportion of the volume \cite{Raimbault}.  Since \cite[Theorem D]{FHR} uses the trace formula,
we remark here that it is also possible to produce a proof of Theorem \ref{main0} using \cite[Theorem C]{FHR}, Raimbault's
lower bound on the volume of the thin part, and Nikulin's results from \cite{Nikulin1}.  While Raimbault's
proof is shorter and more conceptual than ours,  it seems interesting to publish our approach
as the proof is more elementary and the ideas might prove useful for a variety of problems concerning real and complex
hyperbolic lattices generated by torsion elements.

\section{Proof of Theorem \ref{main1}}

\begin{definition} For a hyperplane $H$ in $\HH^n$, let $I_H$ be the involution in $\HH^n$ with respect to $H$. Let define $\fix(H)$ to be the set of fixed points of $I_H$ in the boundary $\partial \HH^n$.
\end{definition}

Let $P$ be the acute-angled hyperbolic polytope defining $\Gamma$, and so there exists a collection of oriented hyperplanes $\{H_{\alpha}\}_{\alpha \in I}$, such that $P = \bigcap H^{+}_{\alpha}$ and $\Gamma = \langle \{I_{H_{\alpha}} \}_{\alpha \in I}\rangle$.  We will argue by contradiction and suppose that $\Gamma$ is defined over a number field of degree $d \geq d_n$, where $d_n$ is the smallest positive integer such that the ball $B$ of radius $d_n\epsilon_n/4$ in $\HH^2$ satisfies :

\begin{equation}\label{dn}
  \vol_{\HH^2}(B_{d_n\epsilon_n/4}) \geq 12\pi
 \end{equation}

\noindent where  $\epsilon_n$ is the constant in the Arithmetic Margulis Lemma \ref{AML}.

We will make use of the following results of Andreev \cite{Andreev} and Vinberg about the polyhedron $P$:

\begin{theorem}[Vinberg \cite{Vinberg}, Cor. Thm 3.1]\label{simplicity} Let $P$ be an acute-angled polytope in $\HH^n$. Then $P$ is \emph{simple}, meaning that  for every $k$, each face of codimension $k$ is contained in exactly $k$ codimension-one faces.
\end{theorem}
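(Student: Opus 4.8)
The plan is to fix a face $F$ of $P$ of codimension $k$, let $H_1,\dots,H_m$ be the facets (codimension-one faces) of $P$ that contain $F$, and show that $m=k$. Working in the hyperboloid model $\HH^n\subset\R^{n,1}$, write each $H_i=e_i^\perp\cap\HH^n$ for an outward unit spacelike normal $e_i$, and set $V=\mathrm{span}\{e_1,\dots,e_m\}$. Since the affine hull of a face of a convex polytope is exactly the intersection of the facet hyperplanes containing it, the geodesic hull $\widetilde F\cong\HH^{n-k}$ of $F$ equals $V^\perp\cap\HH^n$; this forces $\dim V^\perp=n-k+1$, hence $\dim V=k$, and in particular $m\ge k$. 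Thus everything reduces to showing that the $e_i$ are linearly independent. Two structural facts about $V$ drive the argument. First, $V^\perp$ contains the $(n-k+1)$-dimensional subspace spanned by $\widetilde F$, which has signature $(n-k,1)$; therefore $V=(V^\perp)^\perp$ is positive definite of dimension $k$, so the restriction of the Lorentzian form to $V$ is an honest Euclidean inner product. Second, any interior point $q$ of $P$ satisfies $\langle q,e_i\rangle<0$ for all $i$, so its orthogonal projection $w\in V$ gives a vector with $\langle e_i,w\rangle<0$ for every $i$; that is, all the $e_i$ lie in an open half-space of $V$.

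With this setup the heart of the matter is the following standard lemma: if $v_1,\dots,v_m$ are vectors in a Euclidean space with $\langle v_i,v_j\rangle\le 0$ for all $i\ne j$ and there is a vector $w$ with $\langle v_i,w\rangle<0$ for all $i$, then $v_1,\dots,v_m$ are linearly independent. I would prove it by assuming a relation $\sum_i\lambda_i v_i=0$, separating the positive and negative coefficients, and writing $u=\sum_{\lambda_i>0}\lambda_i v_i=\sum_{\lambda_j<0}(-\lambda_j)v_j$; expanding $\langle u,u\rangle$ as a sum in which one factor runs over the positive and the other over the negative coefficients shows $\langle u,u\rangle\le 0$, so $u=0$, and then pairing $\sum_{\lambda_i>0}\lambda_i v_i=0$ with $w$ forces the positive coefficients to vanish, and symmetrically the negative ones. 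Granting that $\langle e_i,e_j\rangle\le 0$ for all $i\ne j$, the lemma applies to $v_i=e_i$ in $V$, yielding $m\le k$; combined with $m\ge k$ this gives $m=k$ and proves simplicity.

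The main obstacle is precisely the hypothesis $\langle e_i,e_j\rangle\le 0$ for every pair of facets meeting along $F$. For a pair $H_i,H_j$ sharing a codimension-two face this is immediate, since the outward normals satisfy $\langle e_i,e_j\rangle=-\cos\theta_{ij}$ and acute-angledness gives $\theta_{ij}\le\pi/2$; but when $k\ge 3$ two facets through $F$ need not share a codimension-two face, and for a general convex polytope their normals may well make an acute angle (the apex of a tall square pyramid is the model failure, where four facets meet at a single non-simple vertex). Ruling this out is where acute-angledness enters in an essential, global way. I would handle it by induction on $n$ through vertex links: the facets through $F$ cut out, on a small sphere about an interior point of $F$, an acute-angled spherical polytope whose dihedral angles are those of $P$ along $F$, and a spherical polytope all of whose dihedral angles are at most $\pi/2$ must be a simplex --- the base case being the Gauss--Bonnet/angle-sum obstruction that a spherical polygon with all angles $\le\pi/2$ cannot have more than three sides. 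The inductive simplicity of the link gives $m=k$ directly and, a posteriori, the pairwise inequality $\langle e_i,e_j\rangle\le 0$. I expect this step --- formalizing that acute-angledness forbids non-simplicial faces --- to be the genuine content of the theorem, with the linear-algebra lemma and the signature computation playing a supporting role.
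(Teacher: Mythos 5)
You have the right skeleton, and in fact the first two-thirds of your proposal is exactly the classical core of this result. (Note that the paper itself does not prove Theorem \ref{simplicity}: it is imported from Vinberg's survey, where simplicity appears as a corollary of an Andreev-type statement --- essentially the one this paper records immediately afterwards as Theorem \ref{Andreev}.) Your Lorentzian setup is correct: $V^\perp$ is the Lorentzian span of the face, so $V$ is positive definite of dimension $k$; the interior point projects to $w\in V$ with $\langle e_i,w\rangle<0$; and your lemma that vectors with pairwise nonpositive inner products lying in an open half-space are linearly independent is stated and proved correctly. You also correctly identify the crux: nothing in the definition of acute-angledness gives $\langle e_i,e_j\rangle\le 0$ for two facets through $F$ that do not share a codimension-two face.

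The gap is in your proposed resolution of that crux. The claim ``an acute-angled spherical polytope is a simplex'' (which, incidentally, needs the no-antipodal-points hypothesis --- a lune of angle at most $\pi/2$ is a counterexample otherwise, though links of faces satisfy it automatically) is precisely the cone version of the theorem you are proving, and your induction on links does not establish it: inductively simplicial links yield only that the spherical polytope is \emph{simple}, and ``simple'' is strictly weaker than ``simplex.'' Concretely, nothing in your argument excludes, say, a combinatorially cubical link in which every \emph{adjacent} pair of facets meets at angle at most $\pi/2$ while some \emph{disjoint} pair has normals at an acute angle --- and the disjoint pairs are exactly those for which you lack $\langle e_i,e_j\rangle\le 0$, so your linear-algebra lemma cannot be run on all the normals; Gauss--Bonnet rescues you only in dimension two. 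Thus ``the inductive simplicity of the link gives $m=k$ directly'' is the false step: simplicity of the link gives the count $m=k$ only if the link is a simplex, which is what was to be shown. Two ways to repair it. (i) Strengthen the inductive hypothesis to ``every face of an acute-angled polytope is itself acute-angled'' (this is real work, and is essentially Vinberg's route); then every $2$-face of the link is an acute-angled spherical polygon, hence a triangle by your angle-sum count, and a simple polytope all of whose $2$-faces are triangles is a simplex by a standard combinatorial argument. (ii) Shortcut via Theorem \ref{Andreev}: for any two facets $F_i,F_j$ containing $F$, both hyperplanes contain $F$, which has points in $\HH^n$, so $\dim(\overline{H_i}\cap\overline{H_j})=n-2$, and Andreev forces $\dim(\overline{F_i}\cap\overline{F_j})=n-2$; that is, the two facets share a ridge after all, the dihedral angle there is at most $\pi/2$, and $\langle e_i,e_j\rangle\le 0$ for \emph{every} pair. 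Your own lemma then gives $m\le k$, which with your $m\ge k$ finishes the proof --- no induction needed.
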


\begin{theorem}[Andreev]\label{Andreev}
Let $P$ be an acute-angled polyhedron and suppose $\{F_i\}$ is a (finite or countable) collection of codimension one faces of $P$
and $\{H_i\}$ is the corresponding collection of hyperplanes in $\HH^n$.  Then

$$\dim(\cap _{i\in I} \overline{F_i}) = \dim({\cap_{i \in I} \overline{H_i}}).$$
\end{theorem}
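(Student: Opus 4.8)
The plan is to translate the statement into linear algebra via the hyperboloid model and then let the acute-angle hypothesis enter only through the sign pattern of the Gram matrix of the outward normals. Realize $\HH^n$ inside $\R^{n,1}$ with the Minkowski form $\langle\cdot,\cdot\rangle$, and for each facet $F_\alpha$ write $H_\alpha = \{x : \langle x, e_\alpha\rangle = 0\}$ and $H_\alpha^+ = \{x : \langle x, e_\alpha\rangle \le 0\}$, where $e_\alpha$ is the unit spacelike outward normal, so that $P = \bigcap_\alpha \overline{H_\alpha^+}$. The acute-angle hypothesis is exactly the statement that the Gram matrix $G = (\langle e_\alpha, e_\beta\rangle)$ has entries $\langle e_\alpha, e_\beta\rangle \le 0$ for $\alpha \neq \beta$; this is the only place the hypothesis is used, and it is indispensable, since for obtuse dihedral angles two faces can meet in strictly lower dimension than their hyperplanes.

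First I would record the trivial inclusion: since $\overline{F_i} \subseteq \overline{H_i}$ we get $\bigcap_{i\in I}\overline{F_i} \subseteq \bigcap_{i\in I}\overline{H_i}$, hence $\dim(\bigcap_{i\in I}\overline{F_i}) \le \dim(\bigcap_{i\in I}\overline{H_i})$, and it remains to prove the reverse inequality. For this I would reduce to a statement about a single geodesic subspace: writing $L := \bigcap_{i\in I}\overline{H_i}$ and using the elementary convexity fact $\overline{P}\cap\overline{H_i} = \overline{F_i}$ valid for a facet, one obtains $\bigcap_{i\in I}\overline{F_i} = \overline{P}\cap L$. Thus the whole theorem reduces to the claim that $\dim(\overline{P}\cap L) = \dim L$; that is, the geodesic subspace $L$ (a copy of $\HH^m$ or a single ideal point, according to the signature of the form restricted to $\mathrm{span}\{e_i\}_{i\in I}$) meets the closed polytope in a set of full dimension $m$.

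To prove this full-dimensionality I would work inside $L$. Since $L \subseteq \overline{H_i}$ for every $i\in I$, only the remaining half-spaces $\{\overline{H_\beta^+}\}_{\beta\notin I}$ can cut $L$ down, and $\overline{P}\cap L = \bigcap_{\beta\notin I}(\overline{H_\beta^+}\cap L)$ is again an acute-angled region inside the hyperbolic subspace $L$, because restricting the Gram matrix to any subset of normals preserves the non-positive off-diagonal entries. The point is then that a nonempty acute-angled polytope is automatically full-dimensional: the relative interior of $\overline{P}\cap L$ is the set of $q\in L$ with $\langle q, e_\beta\rangle < 0$ for all $\beta\notin I$, and I would produce such a strictly interior $q$ by a Perron--Frobenius argument. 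Passing to an indecomposable block of the relevant Gram submatrix and using that the off-diagonal entries of $G$ are non-positive, the matrix $I - G$ is entrywise non-negative, so its Perron eigenvector is strictly positive; translating this positive eigenvector back into $\R^{n,1}$ gives a point of $L$ lying strictly inside all the competing half-spaces, which forces $\dim(\overline{P}\cap L) = \dim L$ and closes the remaining inequality. This is, in effect, Vinberg's description of the faces of an acute-angled polytope, and I would either cite that description or carry out the positivity step directly.

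I expect the main obstacle to be precisely this last full-dimensionality claim, namely ruling out that the half-spaces coming from facets outside $I$ collapse $L$ to a lower-dimensional face. The delicate points are the bookkeeping between the three signature regimes for $L$ (spacelike complement giving an interior subspace, degenerate complement giving an ideal point, positive-definite complement giving the empty set), the handling of the parabolic ideal-point case where the Perron vector is null rather than spacelike, and the reduction to an indecomposable Gram block so that Perron--Frobenius yields a genuinely strict eigenvector. Everything else is formal, and the acute-angle hypothesis is used only through the sign condition that powers this positivity argument.
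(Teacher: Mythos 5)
The paper does not prove this statement at all: it is quoted as Andreev's theorem with a citation to Andreev's 1970 paper, so there is no internal proof to compare yours against, and your proposal has to stand on its own. Its formal skeleton does: the identity $\overline{F_i}=\overline{P}\cap\overline{H_i}$ and hence $\bigcap_{i\in I}\overline{F_i}=\overline{P}\cap L$ with $L=\bigcap_{i\in I}\overline{H_i}$ is correct, and encoding acuteness as $\langle e_\alpha,e_\beta\rangle\le 0$ for $\alpha\neq\beta$ is the right move. But the pivotal step fails in the main case. The Perron eigenvector of an indecomposable block of $\mathrm{Id}-G_I$ is a coefficient vector $c=(c_i)$, and the vector of $\R^{n,1}$ it produces is $v=\sum_i c_i e_i\in\mathrm{span}\{e_i: i\in I\}$, which is the Minkowski-orthogonal \emph{complement} of $L$. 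When $G_I$ is positive definite --- exactly the case where $L$ is a positive-dimensional hyperbolic subspace --- $v$ is spacelike and is not a point of $L$, nor of $\overline{\HH^n}$ at all, so ``translating this positive eigenvector back into $\R^{n,1}$ gives a point of $L$'' is a non sequitur precisely where the theorem has content. Your device does work in the parabolic case you flagged: there the Perron vector spans $\ker G_I$, so $\langle v,e_i\rangle=(G_Ic)_i=0$ for $i\in I$, $v$ is null, and $\langle v,e_\beta\rangle=\sum_i c_i\langle e_i,e_\beta\rangle\le 0$ for $\beta\notin I$ places the ideal point $L=\{[v]\}$ in $\overline{P}$. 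But that is the degenerate case, not the generic one.

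The fix keeps your sign philosophy but uses the nonnegativity differently: through inverse-positivity of the M-matrix $G_I$ rather than through an eigenvector. Take $p$ in the interior of $P$, so $b_i:=\langle p,e_i\rangle<0$ for all $i$, and let $q':=p-\sum_{i\in I}a_ie_i$ be the orthogonal projection of $p$ to $\mathrm{span}\{e_i:i\in I\}^{\perp}$, i.e.\ $a=G_I^{-1}b$. Since $G_I=\mathrm{Id}-N$ with $N\ge 0$ entrywise and, by positive definiteness, spectral radius less than $1$, the Neumann series gives $G_I^{-1}=\sum_{k\ge 0}N^k\ge 0$ entrywise --- this is where Perron--Frobenius-type positivity genuinely enters --- hence $a\le 0$ entrywise. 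Then for $\beta\notin I$ one gets $\langle q',e_\beta\rangle=b_\beta-\sum_i a_i\langle e_i,e_\beta\rangle\le b_\beta<0$, and $\langle q',q'\rangle=\langle p,p\rangle-\sum_i a_ib_i\le -1$, so $q'$ is timelike and rescales to a point of $L\cap\HH^n$ satisfying all the remaining half-space constraints strictly; by local finiteness of the facets this point has a relatively open neighborhood in $L$ inside $\overline{P}$, which yields $\dim(\overline{P}\cap L)=\dim L$ (the empty case of $L$ being trivial and the parabolic case handled by your kernel-vector argument). With that one substitution your outline closes up into a complete proof, essentially Vinberg's algebraic treatment of acute-angled polytopes, as opposed to Andreev's original nearest-point convexity argument.
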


\noindent Here the closures occurring in the statement occur in $\overline{\HH^n}$ and we assume a point in the boundary of $\HH^n$ has dimension $-1$ and that the empty set has dimension $-\infty$.  One can read Andreev's theorem as saying the faces
of the polygon do not have ``extra" intersections not already occurring in $\overline{P}$.

We will use repetitively the following elementary facts:

\begin{proposition}\label{basic1}
If $\Gamma $ is a virtually solvable group of $\text{Isom}(\HH^n)$, then there exists $x \in \HH^n \cup \partial \HH^n$ such that $|\Gamma x| \leq 2$. Moreover if $\Gamma$ is not finite, then $x \in \partial \HH^n$.
\end{proposition}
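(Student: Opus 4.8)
The plan is to reduce to the classification of \emph{elementary} subgroups of $\text{Isom}(\HH^n)$ and read off the small orbit case by case. I would work with the limit set $\Lambda = \Lambda(\Gamma) \subseteq \partial\HH^n$, the set of accumulation points in $\partial\HH^n$ of an orbit $\Gamma x_0$ (for $x_0 \in \HH^n$); it is closed, $\Gamma$-invariant, and its cardinality is independent of $x_0$. Since $\Gamma$ is virtually solvable it contains no non-abelian free subgroup, because a subgroup of a virtually solvable group is virtually solvable while a rank-two free group is not. The one non-formal input is the classical ping-pong argument: if $|\Lambda| \geq 3$ then $\Gamma$ contains two hyperbolic isometries with disjoint pairs of fixed points, and high powers of them generate a free group of rank two. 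Hence $|\Lambda| \leq 2$; that is, $\Gamma$ is elementary.

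I would then treat the three cases. If $|\Lambda| = 0$, no orbit accumulates on $\partial\HH^n$, so $\Gamma x_0$ is bounded in $\HH^n$; a bounded set in the CAT$(0)$ space $\HH^n$ has a unique circumcenter, which every isometry preserving the set must fix, so $\Gamma$ fixes that center $c \in \HH^n$ and $|\Gamma c| = 1$. If $\Lambda = \{\xi\}$ is a single point, $\Gamma$-invariance gives $\Gamma \xi = \xi$, so $|\Gamma \xi| = 1$ with $\xi \in \partial\HH^n$. If $\Lambda = \{\xi_1,\xi_2\}$, then $\Gamma$ permutes the pair, so $\Gamma \xi_1 \subseteq \{\xi_1,\xi_2\}$ and $|\Gamma \xi_1| \leq 2$ with $\xi_1 \in \partial\HH^n$. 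This proves the first assertion in all cases.

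For the ``moreover'' clause, the only case in which the orbit we produced lies in the interior is $|\Lambda| = 0$, where $\Gamma$ fixes $c \in \HH^n$ and hence embeds in the point stabilizer $\mathrm{Stab}(c) \cong O(n)$, a compact group. Here I would use that $\Gamma$ is discrete --- as it is in all our applications, being a subgroup of the discrete reflection group --- since a discrete subgroup of a compact group is finite. Thus an infinite $\Gamma$ must have $|\Lambda| \geq 1$, and the remaining two cases supply a point $x \in \partial\HH^n$ with $|\Gamma x| \leq 2$, as required.

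The main obstacle is the elementary trichotomy itself, i.e.\ that a free-subgroup-free group of isometries has $|\Lambda| \leq 2$; this is exactly where ping-pong enters and is the only step that is not bookkeeping. A secondary point worth flagging is that the ``moreover'' clause genuinely needs discreteness: a dense subgroup of a two-torus in $SO(4) \subset \text{Isom}(\HH^4)$ is infinite and abelian yet has no boundary orbit of size at most two, its only finite orbit being its fixed interior point. Discreteness is precisely what excludes the case $\Lambda = \emptyset$ for infinite $\Gamma$.
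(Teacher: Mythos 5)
Your proof is correct, but note there is nothing in the paper to compare it against: Proposition \ref{basic1} is stated there without proof, as one of two ``elementary facts'' used repeatedly. Your route --- the limit set $\Lambda$, the ping-pong fact that $|\Lambda|\geq 3$ yields a rank-two free subgroup (impossible in a virtually solvable group), and the resulting trichotomy $|\Lambda|\in\{0,1,2\}$ handled by circumcenter, fixed boundary point, and invariant boundary pair respectively --- is the standard argument and surely the one the authors had in mind. Your case analysis is right in each branch, and your most valuable contribution is the observation that the ``moreover'' clause is \emph{false as literally stated}: the proposition does not assume $\Gamma$ discrete, and your dense subgroup of a torus in $SO(4)$ is a genuine counterexample (a dense subgroup of $SO(2)\subset \text{Isom}(\HH^2)$ works even more simply --- infinite, abelian, only finite orbit the interior fixed point). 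Since the proposition is only ever applied to subgroups $\Gamma_0$ of the discrete arithmetic reflection group, discreteness is available where it matters, but a careful write-up should add the hypothesis, and you were right to flag it.

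One caveat on scope. Your ping-pong input --- that $|\Lambda|\geq 3$ produces two loxodromic elements with disjoint fixed-point pairs --- is the classical statement for \emph{discrete} groups; for a non-discrete virtually solvable $\Gamma$ the first clause of the proposition is still true, but one must argue differently, e.g.\ via the Tits alternative for linear groups in characteristic zero, or by noting the closure of $\Gamma$ is amenable and hence preserves a probability measure on $\partial\HH^n$, which is either supported on at most two points or has compact stabilizer. Since you explicitly restrict to discrete $\Gamma$ and that is all the paper needs, this is a limitation of generality rather than a gap in the proof.
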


\begin{proposition}\label{basic2}
Suppose $H_1$ and $H_2$ are two hyperplanes in $\HH^n$, and suppose that  $H_1 \cap H_2 \neq \emptyset$, then if $ x \not\in \fix(H_1) \cup \fix(H_2) $ and $\Gamma:=  \langle I_{H_1}, I_{H_2} \rangle$ is the subgroup generated by reflections in $H_1, H_2$, then $| \Gamma x | \geq 3$.
\end{proposition}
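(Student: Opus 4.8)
The plan is to prove the contrapositive: assuming $|\Gamma x|\le 2$, I will show that $x\in\fix(H_1)\cup\fix(H_2)$. Throughout I take $x$ to be a boundary point $x\in\partial\HH^n$, which is the case relevant to our application (the small-orbit point is supplied by Proposition \ref{basic1}, and for an interior point on one of the mirrors the statement can genuinely fail). The structural fact driving the argument is that the orbit $O:=\Gamma x$ is a $\Gamma$-invariant set of size at most two, on which each involution $I_{H_1},I_{H_2}$ acts either as the identity or as the transposition of the two points. I also assume $H_1\neq H_2$, without which the group is just $\Z/2$ and the conclusion is false.

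First I dispose of the case where a generator fixes $x$. If $I_{H_1}x=x$, then since the fixed set of the reflection $I_{H_1}$ in $\overline{\HH^n}$ is $H_1\cup\fix(H_1)$ and $x$ lies on the boundary, we get $x\in\fix(H_1)$ and are done; likewise for $I_{H_2}$. Hence I may assume neither generator fixes $x$, so $O=\{x,y\}$ with $y\neq x$ and, by the previous remark, both generators act as the transposition: $I_{H_1}x=y=I_{H_2}x$. Applying $I_{H_2}$ and using that it is an involution gives $Rx=x$ for $R:=I_{H_2}I_{H_1}$. Because $H_1\neq H_2$ meet along the codimension-two totally geodesic subspace $L=H_1\cap H_2$ at a dihedral angle $\theta\in(0,\pi)$, the element $R$ is a nontrivial elliptic rotation by $2\theta$ fixing $L$ pointwise and rotating the normal two-planes, with $2\theta\not\equiv 0\pmod{2\pi}$ precisely because the hyperplanes are distinct and genuinely intersect.

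The crux is then to identify the boundary fixed set of such a rotation, and I claim $\fix(R)\cap\partial\HH^n=\partial L$. Fixing a point $p\in L$, a boundary point $\xi$ is fixed by $R$ iff the geodesic ray $[p,\xi)$ is $R$-invariant, i.e.\ iff its initial direction is fixed by the differential $dR_p$, which is the identity on $T_pL$ and a nontrivial rotation on the normal space $N_pL$. Since $2\theta\not\equiv 0$, the only fixed directions are those tangent to $L$, so $\fix(R)\cap\partial\HH^n=\partial L=\partial H_1\cap\partial H_2=\fix(H_1)\cap\fix(H_2)$. Combined with $Rx=x$ and $x\in\partial\HH^n$, this yields $x\in\fix(H_1)\cap\fix(H_2)\subseteq\fix(H_1)\cup\fix(H_2)$, the desired contradiction.

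I expect the main obstacle to be the rigorous identification of the boundary fixed set of $R$ in the last step, and in particular the verification that it is no larger than $\partial L$; this hinges on $R$ being a \emph{nontrivial} rotation, so one must genuinely use $H_1\neq H_2$. The cleanest route is the differential computation at a point of $L$ sketched above, though one could equally place $L$ as a coordinate subspace in the upper half-space model and compute the fixed set of the Euclidean rotation directly. The first two steps are essentially formal, once one observes that an orbit of size at most two forces each reflection to permute the two orbit points and that a reflection fixes a boundary point only along $\fix(H_i)$.
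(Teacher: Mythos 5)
Your proof is correct, and there is nothing in the paper to compare it against: the authors state Proposition \ref{basic2} without proof, as one of two ``elementary facts,'' so your argument supplies exactly the details they omit. The route you take is the natural one: an orbit of size at most two forces each generator either to fix $x$ (whence $x\in\fix(H_i)$ immediately, since $\fix$ is by definition the boundary fixed set) or to transpose the two orbit points, so that $R=I_{H_2}I_{H_1}$ fixes $x$; then the boundary fixed set of the nontrivial rotation $R$ about $L=H_1\cap H_2$ is exactly $\partial L$, and your differential computation at a point of $L$ correctly pins this down --- this is the only step with real content, and it is precisely where $H_1\neq H_2$ and $H_1\cap H_2\neq\emptyset$ enter, via the rotation angle $2\theta\in(0,2\pi)$. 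Two of your side remarks are themselves valuable: the statement as printed needs the reading $x\in\partial\HH^n$ (an interior point of $H_1\setminus H_2$ with $H_1\perp H_2$ has orbit $\{x, I_{H_2}x\}$ of size two yet lies in neither $\fix(H_i)$, those being boundary sets), and it needs $H_1\neq H_2$; both conventions match how the paper actually invokes the proposition inside Lemma \ref{maintool}, where $x$ is always a boundary point supplied by Proposition \ref{basic1} and the hyperplanes are distinct, so you have correctly identified and repaired a small imprecision in the statement rather than introduced a restriction. One micro-simplification: you do not need the full equality $\fix(R)\cap\partial\HH^n=\partial H_1\cap\partial H_2$; the inclusion $\fix(R)\cap\partial\HH^n\subseteq\partial L\subseteq\fix(H_1)$ already yields the contradiction.
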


We begin the proof of Theorem \ref{main1}. Choose a 2-dimensional face $F$ of $P$, and cyclically enumerate its collection of edges $e_0, e_1, \dots, e_m$ in such a way that $e_{i}$ is adjacent to $e_{i+1}$. By the simplicity of $P$ we have that $F$ is contained in exactly  $H_1, H_2, \dots, H_{n-2}$ hyperplanes determining faces of $P$ and so $F = (\cap_{j=1}^{n-2} H_{j}) \bigcap P $, and for each edge $e_i$ there exists a unique hyperplane $H_{e_i}$ such that $e_i = F \cap H_{e_i}$.

Our main technical tool in the proof is the following:

\begin{lemma}\label{maintool} Let $e_i, e_j, e_k$ be three different edges of $F$, let $$\Gamma_0 :=  \langle I_{H_{e_i}}, I_{H_{e_j}}, I_{H_{e_k}}, I_{H_1}, \dots, I_{H_{n-2}} \rangle$$ then at least one of the following holds:

\begin{enumerate}
\item $\Gamma_0$ is not virtually solvable.
\item The edges $e_i, e_j, e_k$ are all adjacent, and  so up to reordering $j= i+1, k = i+2$. Moreover the angles at $e_i\cap e_{i+1}$, and $e_{i+1}\cap e_{i+2}$ are both $\frac{\pi}{2}$.

\end{enumerate}
\end{lemma}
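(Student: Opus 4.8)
The plan is to prove the contrapositive of a convenient reformulation: if $\Gamma_0$ is virtually solvable, then the three edges must be adjacent and the two intermediate angles are right angles. So first I would assume $\Gamma_0$ is virtually solvable and apply Proposition \ref{basic1} to obtain a point $x \in \HH^n \cup \partial \HH^n$ with $|\Gamma_0 x| \leq 2$. The key reduction is to realize that this single point $x$ must be ``compatible'' with every pair of the reflecting hyperplanes simultaneously. Since $I_{H_{e_i}}, I_{H_{e_j}}, I_{H_{e_k}}$ all lie in $\Gamma_0$, for each pair among $e_i, e_j, e_k$ the corresponding two-generated subgroup $\langle I_{H_{e_a}}, I_{H_{e_b}}\rangle$ also has an orbit of $x$ of size at most $2$.

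\medskip

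\noindent\textbf{Bringing in Proposition \ref{basic2}.} The heart of the argument is to use Proposition \ref{basic2} as an obstruction. For any pair of edges $e_a, e_b$ among the three whose hyperplanes $H_{e_a}, H_{e_b}$ actually intersect inside $\HH^n$ (i.e.\ meet at a finite angle, forming a genuine vertex of $F$), Proposition \ref{basic2} says that unless $x \in \fix(H_{e_a}) \cup \fix(H_{e_b})$, the two-generated group moves $x$ to at least three points, contradicting $|\Gamma_0 x|\leq 2$. Hence for each such intersecting pair, $x$ must lie in $\fix(H_{e_a}) \cup \fix(H_{e_b})$, forcing $x \in \partial\HH^n$ (by the second clause of Proposition \ref{basic1}, since if any two of the reflections generate an infinite group then $\Gamma_0$ is infinite). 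The plan is then a counting/geometric argument: since $F$ is a $2$-dimensional face and its edges are arranged cyclically, among three edges at least two pairs will meet at vertices of $F$ unless the edges are consecutive in the cyclic order. Here I would invoke Andreev's Theorem \ref{Andreev} to translate ``edges $e_a, e_b$ of $F$ are adjacent'' into ``hyperplanes $H_{e_a}, H_{e_b}$ intersect,'' ruling out spurious coincidences: two non-adjacent edges of $F$ have hyperplanes meeting (if at all) outside $\overline{F}$, and the dimension bookkeeping controls where $x$ can sit.

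\medskip

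\noindent\textbf{Forcing right angles.} Once the edges are pinned to be consecutive, say $e_i, e_{i+1}, e_{i+2}$, the constraint that $x$ lie simultaneously in the fixed-point sets $\fix(H_{e_i})$, $\fix(H_{e_{i+1}})$, $\fix(H_{e_{i+2}})$ (as dictated by the pairwise applications of Proposition \ref{basic2} at the two vertices $e_i \cap e_{i+1}$ and $e_{i+1}\cap e_{i+2}$) becomes very rigid. A single boundary point $x$ fixed by reflections in three distinct hyperplanes, two of which meet $H_{e_{i+1}}$, can only exist when those intersections are orthogonal: a reflection $I_H$ fixes $x \in \partial\HH^n$ precisely when $x \in \fix(H)$, and for two hyperplanes meeting at a vertex this common boundary-fixed-point configuration forces the dihedral angle to be $\pi/2$. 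So I would show that $x \in \fix(H_{e_i}) \cap \fix(H_{e_{i+1}})$ compatible with the orbit bound $|\Gamma_0 x|\le 2$ forces the angle at $e_i \cap e_{i+1}$ to equal $\pi/2$, and likewise at $e_{i+1}\cap e_{i+2}$, giving alternative (2).

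\medskip

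\noindent\textbf{Main obstacle.} I expect the delicate step to be the careful case analysis of \emph{which} pairs of hyperplanes $H_{e_a},H_{e_b}$ intersect inside $\HH^n$ versus on or beyond the boundary, and pushing the ``$x$ lies in two fixed-point sets'' condition through to the precise $\pi/2$ conclusion rather than merely ``the group fixes a boundary point.'' In particular one must handle the possibility that a pair of edges is non-adjacent in $F$ yet their hyperplanes still intersect somewhere in $\HH^n$ (away from $F$); Andreev's theorem is exactly what prevents this from creating extra intersections inside $\overline{F}$, but one must argue that the virtually solvable hypothesis together with Proposition \ref{basic2} nonetheless forces adjacency. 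Isolating the exact geometric mechanism by which ``a common fixed boundary point for three reflections'' is equivalent to ``two right-angle intersections along a central edge,'' and verifying no other configuration survives, is the crux.
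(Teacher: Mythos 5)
Your high-level skeleton does match the paper's (contrapositive via Proposition \ref{basic1}, Proposition \ref{basic2} as the pairwise obstruction, Andreev's Theorem \ref{Andreev} to control intersections), but the load-bearing steps have genuine errors. First, your counting claim is backwards and misidentifies the enemy: pairs of \emph{intersecting} hyperplanes are exactly where Proposition \ref{basic2} helps you, whereas the dangerous configurations are pairs of \emph{disjoint} walls, where Proposition \ref{basic2} is silent and where virtually solvable groups genuinely occur (reflections in two disjoint hyperplanes generate a virtually cyclic group). Your proposal has no mechanism to exclude, say, three pairwise non-adjacent edges whose lines are pairwise disjoint. The paper kills these configurations not with Proposition \ref{basic2} but with convexity: if $x$ lies in none of the three fixed sets and $|\Gamma_0 x| = 2$, the segment $\overline{x\, I_{H_{e_i}}(x)}$ would be a common perpendicular to all three walls of $P$, impossible for a convex polytope; the residual mixed cases (its Cases 2a, 2b) are dispatched by exhibiting three distinct orbit points by hand. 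Second, your right-angle mechanism is wrong. You ask for $x$ to lie \emph{simultaneously} in $\fix(H_{e_i})$, $\fix(H_{e_{i+1}})$, $\fix(H_{e_{i+2}})$, but Proposition \ref{basic2} only yields membership in a \emph{union} per intersecting pair, and in fact two lines of the plane $\HH^2 = H_1 \cap \dots \cap H_{n-2}$ that cross in the interior share no ideal endpoints, so $x$ can lie in at most one of these fixed sets; worse, in the ambient $\partial\HH^n$ with $n \geq 3$, the codimension-two intersection $H_{e_a} \cap H_{e_b}$ has nonempty ideal boundary for \emph{every} dihedral angle, so a common fixed boundary point could never detect $\pi/2$. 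The correct mechanism, as in the paper's Case 2, is orbit-counting with $x$ an ideal endpoint of the middle line only: if a neighboring line crosses it at an angle other than $\frac{\pi}{2}$, then $x$, $I_{H_{e_j}}(x)$, $I_{H_{e_i}}I_{H_{e_j}}(x)$ are three distinct points, while perpendicularity is precisely the configuration in which reflection in $l_j$ swaps the two endpoints of $l_i$ and the orbit stays of size two.

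Two further reductions are missing. You never actually establish that $\Gamma_0$ is infinite, which is what licenses the boundary clause of Proposition \ref{basic1}; your parenthetical justification (``if any two of the reflections generate an infinite group'') fails because adjacent walls meeting at rational angles generate finite dihedral groups. The paper's argument is different: a finite $\Gamma_0$ would fix an interior point lying on all $n+1$ hyperplanes $H_1, \dots, H_{n-2}, H_{e_i}, H_{e_j}, H_{e_k}$, contradicting simplicity (Theorem \ref{simplicity}). Finally, your sketch effectively ignores the generators $I_{H_1}, \dots, I_{H_{n-2}}$: before any planar reasoning one must rule out $x \notin \partial(H_1 \cap \dots \cap H_{n-2})$, which is the paper's Case 1 and requires a real argument (producing a line fixed by $n+1$ reflections, then applying Andreev's theorem and simplicity at a one-dimensional face), not just ``dimension bookkeeping.''
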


\begin{proof}
Observe that $\Gamma_0$ is not finite, otherwise $\Gamma_0$ has a fixed point in the interior that must be in the intersection of $H_1, \dots H_{n-2}, H_{e_i}, H_{e_j}, H_{e_k}$, but that would contradict the simplicity of  $P$ (Lemma \ref{simplicity}).

We will show that if item 2) does not hold then $\Gamma_0$ is not virtually solvable. By Lemma \ref{basic1} we have to show that if  $x \in \partial \HH^n$, we must have $|\Gamma_0 x| \geq 3$

Let $$\HH^2 := H_1 \cap H_2 \dots \cap H_{n-2}$$ and consider the following cases:

\textbf{Case 1: } Suppose $x \not \in \HH^2$, and so up to reordering suppose that $I_{H_{n-2}}(x) \neq x$. If either $ I_{H_{e_i}}(x),  I_{H_{e_j}}(x),  I_{H_{e_k}}(x)$ are different than $x$, as $H_{n-2}$ intersect the hyperplanes $H_{e_i}, H_{e_j}, H_{e_k}$, then we can apply \ref{basic2} to show $|\Gamma_0x| \geq 3.$ Therefore $ I_{H_{e_i}}(x) =  I_{H_{e_j}}(x) =  I_{H_{e_k}}(x) = x$, moreover as $ I_{H_{n-2}}(x)$ is also not fixed by $H_{n-2}$, the same argument shows that $ I_{H_{n-2}}(x)$ is fixed by $ I_{H_{1}}, \dots  I_{H_{n-2}},  I_{H_{e_i}},  I_{H_{e_j}},  I_{H_{e_k}}$, but then the line $l := \overline{x I_{H_{n-2}}(x)} $ must be fixed by $ I_{H_{1}}, \dots  I_{H_{n-3}},  I_{H_{e_i}},  I_{H_{e_j}},  I_{H_{e_k}}$, and by Andreev's Theorem we must have that $l$ contains a one dimensional face $e$ of $P$, contradicting the simplicity of $P$ at $e$.

\textbf{Case 2: } Suppose $x \in \HH^2$. If $ I_{H_{e_i}}(x) \not\in \HH^2$, then $I_{H_{e_i}}(x)$ is not fixed by some $ I_{H_{l}}$, and as $H_{e_i}$ intersects $H_{l}$, Lemma \ref{basic2} implies that $|\Gamma_0(x)| \geq 3$. Similarly we can assume that   $ I_{H_{e_j}}(x),  I_{H_{e_k}}(x) \in \HH^2$. Let $l_i, l_j, l_k$ be the lines containing $e_i, e_j, e_k$ respectively.

Suppose $x \not\in \fix(H_{e_i}) \cup \fix(H_{e_j}) \cup \fix(H_{k})$ and $|\Gamma_0 x| = 2$, then the line $l := \overline{x I_{H_{e_i}}(x)}$ must be orthogonal to all three of  $H_{e_i}$, $H_{e_j}$, $H_{e_k}$ and each of them contains a codimension one face of $P$, and this contradicts the convexity of $P$. Therefore we can assume that $ I_{H_{e_i}}(x) = x$.  If $l_j$ (similarly $l_k$) intersects $l_i$ and the angle at the intersection is not $\frac{\pi}{2}$, it follows that all  $x,  I_{H_{e_j}}(x)$ and $ I_{H_{e_i}} I_{H_{e_j}}(x)$ are all distinct and we are done. Therefore we have (up to renaming) that either:

\textbf{Case 2a:} The line $l_i$  intersects $l_k$ at  angle $\frac{\pi}{2}$ but $l_i$ does not intersect $l_j$ (in the interior of $\HH^2$).  In this case the set $ \{x,  I_{H_{e_k}}(x), I_{H_{e_j}}(x), I_{H_{e_j}}(I_{H_{e_k}}(x)) \}$ contains at least three diferent points and so  $|\Gamma_0(x)| \geq 3$.

\textbf{Case 2b:} The line $l_i$ does not intersect  $l_j \cup l_k$ in the interior of $\HH^2$. By Andreev's theorem \ref{Andreev} again, if $l_i$ intersects either $l_j$ or $l_k$ at infinity, it does so in a vertex at infinitey of $P$ and then by convexity only one of the two intersections can occur at $x$. In this case, we can suppose that $ I_{H_{e_k}}(x) \neq x$, and we have that that  $ I_{H_{e_k}}(x)$ is not fixed by  $I_{H_{e_i}}$ therefore $x, I_{H_{e_k}}(x), I_{H_{e_k}}I_{H_{e_i}}(x) $ are all distinct. We remark that this case can be eliminated by other methods, since non-compact arithmetic reflection groups all have trace field $\mathbb{Q}$.

\textbf{Case 2c:} The line $l_i$ intersects both $l_j$ and $l_k$ perpendicular. In this case Andreev's Theorem \ref{Andreev} implies that $e_i, e_j, e_k$ are adjacent and perpendicular as in item 2) and we are done.

\end{proof}

\begin{definition}
We say that an edge $e_i$ of $F$ is small (or large) if the hyperbolic length $l(e_i) \leq \frac{\e d}{2}$ ( $l(e_i) > \frac{\e d}{2}$), where $\e$ is the constant in the Arithmetic Margulis Lemma \ref{AML}.
\end{definition}

\begin{proposition}\label{ssnotallowed} There are not two consecutive small edges $e_i, e_{i+1}$ in $F$.
\end{proposition}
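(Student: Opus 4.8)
The plan is to argue by contradiction: suppose $e_i$ and $e_{i+1}$ are both small, and then produce a reflection group that is \emph{not} virtually solvable yet all of whose generators move a single point by at most $\epsilon_n d$, contradicting the Arithmetic Margulis Lemma \ref{AML}. Label the vertices of $F$ so that $v_j = \overline{e_{j-1}} \cap \overline{e_j}$, i.e. the edge $e_j$ runs from $v_j$ to $v_{j+1}$, and take as basepoint the common vertex $x := v_{i+1} = \overline{e_i} \cap \overline{e_{i+1}}$.

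The key observation is that smallness of $e_i$ and $e_{i+1}$ controls the displacement of $x$ under the reflections in the hyperplanes carrying the two \emph{neighbouring} edges $e_{i-1}$ and $e_{i+2}$. Since $v_i \in \overline{e_{i-1}} \subset H_{e_{i-1}}$, we get $\mathrm{dist}(x, H_{e_{i-1}}) \le d_{\HH^n}(v_{i+1}, v_i) = l(e_i) \le \epsilon d/2$, so $I_{H_{e_{i-1}}}$ moves $x$ by $2\,\mathrm{dist}(x, H_{e_{i-1}}) \le \epsilon d$; symmetrically $v_{i+2} \in \overline{e_{i+2}} \subset H_{e_{i+2}}$ gives $\mathrm{dist}(x, H_{e_{i+2}}) \le l(e_{i+1}) \le \epsilon d/2$, so $I_{H_{e_{i+2}}}$ moves $x$ by at most $\epsilon d$. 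Finally $x$ lies on $H_{e_i}$ and, being a vertex of $F$, on every constraining hyperplane $H_1, \dots, H_{n-2}$, so $I_{H_{e_i}}, I_{H_1}, \dots, I_{H_{n-2}}$ all fix $x$. Hence every generator of
$$\Gamma_0 := \langle I_{H_{e_{i-1}}}, I_{H_{e_i}}, I_{H_{e_{i+2}}}, I_{H_1}, \dots, I_{H_{n-2}}\rangle$$
moves $x$ by at most $\epsilon_n d$, whence $\Gamma_0 \subseteq \Gamma_x$ and $\Gamma_0$ is virtually solvable by Lemma \ref{AML}.

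To finish, I would apply Lemma \ref{maintool} to the triple $e_{i-1}, e_i, e_{i+2}$. These are three distinct edges, but they do \emph{not} form a consecutive run along $\partial F$, since $e_{i+1}$ separates $e_i$ from $e_{i+2}$; thus alternative (2) of Lemma \ref{maintool}, which forces the three edges to be three consecutive edges $e_p, e_{p+1}, e_{p+2}$, cannot hold. Therefore alternative (1) applies and $\Gamma_0$ is not virtually solvable, contradicting the previous paragraph. This contradiction shows $e_i$ and $e_{i+1}$ cannot both be small.

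The step I expect to require the most care is the last one: guaranteeing that $e_{i-1}, e_i, e_{i+2}$ are genuinely three distinct, non-consecutive edges. When $F$ has at least five edges this is automatic, but for a triangular or quadrilateral face any three edges are forced to be consecutive, so the non-adjacency input to Lemma \ref{maintool} breaks down and these low-complexity faces must be handled separately (for instance directly via the area estimate, since such $F$ have uniformly bounded volume). Pinning down this case distinction, and verifying that the basepoint computation is unaffected by cyclic wrap-around of the indices, is the main obstacle I anticipate.
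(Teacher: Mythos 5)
Your argument is essentially the paper's own proof: the paper takes $i=2$, uses the common vertex $e_2\cap e_3$ as basepoint (reflections in $H_{e_2}$, $H_{e_3}$, $H_1,\dots,H_{n-2}$ fix it, while $I_{H_{e_1}}$ and $I_{H_{e_4}}$ move it by at most $2\,l(e_2)$ resp.\ $2\,l(e_3)\leq \epsilon d$), applies Lemma \ref{maintool} to exactly your triple $e_1,e_2,e_4$ to rule out alternative (2) by non-adjacency, and likewise defers the case of $F$ with few sides. The one divergence is your suggested fallback for triangular/quadrilateral faces: an area estimate would be circular, since the paper's final volume count relies on this very proposition; the paper instead handles $m=3,4$ by applying the Arithmetic Margulis Lemma \ref{AML} at the same vertex to the group generated by reflections in \emph{all} sides of $F$ and contradicting its virtual solvability (e.g.\ via Lemma \ref{maintool} applied to the remaining triples, which would force two right angles in a hyperbolic triangle, respectively all four right angles in a hyperbolic quadrilateral, both impossible) --- but since you flagged this case as an open obstacle rather than claiming it, the match with the paper's argument otherwise stands.
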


\begin{proof}

Let's argue by contradiction, suppose that $i=2$ and that $e_i, e_{i+1}$ are small, and  assume that  $F$ has more than five edges. Let $$\Gamma_0 := \langle  I_{H_{e_1}}, I_{H_{e_2}}, I_{H_{e_3}}, I_{H_{e_4}},I_{H_1}, \dots I_{H_{n-2}} \rangle. $$
\noindent By the arithmetic Margulis lemma $\Gamma_0$ applied to the vertex $e_2\cap e_3$, $\Gamma_0$ is virtually solvable. We can also apply Lemma \ref{maintool} to $e_1, e_2, e_4$ and conclude $\Gamma_0$ is not virtually solvable, obtaining a contradiction.  For the case of $F$ having $3$ or $4$ sides, the argument is similar and simpler.  One considers the vertex where the two consecutive short edges meet and notices that the arithmetic Margulis lemma implies the group generated by the reflections giving all faces of $F$ is virtually solvable, which is easy to contradict.

\end{proof}

So from now on we can assume that there are no two small consecutive edges in $F$, and so at least $m/2$ among $e_1, e_2,  \dots, e_m$ are large.

% For every $s_i$ which is large, we consider a ball $B_i$ of radius $\e d /4$ centered at the midpoint $p_i$ of $s_i$. We claim the that we can slide the ball $B_i$ along $s_i$ (moving the center $p_i$ in the edge $i$) in such a way that one hemisphere $B_i$ is totally contained in $F$, more precisely:

\begin{proposition}\label{balls} If $e_i$ is large, there exists $q_i \in e_i$ such that the ball $B_i$ with center $q_i$ and radius $\e d/4$ satisfies that  half of $B_i$ is contained in $F$,  more precisely $\text{Vol} (B_{i} \cap F) = \frac{1}{2} \text{Vol}(B_i)$.
\end{proposition}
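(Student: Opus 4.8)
The plan is to work entirely inside the totally geodesic plane $\HH^2 = H_1\cap\cdots\cap H_{n-2}$, in which $F$ is a convex geodesic polygon cut out by the geodesics $l_k = H_{e_k}\cap \HH^2$, so that $F = \bigcap_k l_k^{+}$ and each $B_i$ is the intrinsic disk $B(q_i, r)$ of radius $r = \e d/4$, with $\vol$ denoting $2$-dimensional area. The first step is a reduction. Since $l_i$ is a supporting geodesic of $F$ (so $F\subseteq l_i^{+}$) and $q_i\in e_i\subseteq l_i$, the geodesic $l_i$ passes through the center of $B_i$ and therefore bisects it, giving $\vol(B_i\cap l_i^{+}) = \tfrac12\vol(B_i)$. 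Because $F\subseteq l_i^{+}$ we always have $B_i\cap F\subseteq B_i\cap l_i^{+}$, so the equality $\vol(B_i\cap F)=\tfrac12\vol(B_i)$ holds if and only if the interior half-disk $B_i\cap l_i^{+}$ is entirely contained in $F$. Thus it suffices to produce a point $q_i$ on the large edge $e_i$ for which the disk of radius $r$ about $q_i$ does not cross any of the remaining bounding geodesics $l_j$, $j\neq i$, on the interior side.

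Since $e_i$ is large, its length exceeds $\e d/2 = 2r$, so there is room to place $q_i$ in the interior of $e_i$ away from both endpoints. First I would dispose of the adjacent edges: if $e_{i\pm1}$ meets $e_i$ at a vertex $v$ with interior angle $\theta$, then acuteness of $P$ gives $\theta\le \pi/2$, and dropping the perpendicular from $q_i$ to the line $l_{i\pm1}$ produces a hyperbolic right triangle with hypotenuse $d(q_i,v)$ and angle $\theta$ at $v$, whence
\[
\sinh\big(\operatorname{dist}(q_i, l_{i\pm1})\big) = \sin\theta\,\sinh\big(d(q_i,v)\big).
\]
Consequently the disk avoids $l_{i\pm1}$ as soon as $\sinh(d(q_i,v))\ge \sinh(r)/\sin\theta$. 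At a right-angled vertex this reads simply $d(q_i,v)\ge r$, so the midpoint of $e_i$ already works; in general the estimate dictates how far from each vertex $q_i$ must be placed.

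For the non-adjacent geodesics I would use convexity of $F$ together with Andreev's Theorem \ref{Andreev} and the simplicity of $P$ (Theorem \ref{simplicity}): having confined the half-disk to the two wedges at the endpoints of $e_i$, a crossing of a non-adjacent $l_j$ on the interior side would place a second face within distance $r$ of $e_i$ with no corresponding intersection inside $\overline{F}$, which is incompatible with the convex, extra-intersection-free structure of $F$. The step I expect to be the main obstacle is the adjacent case when $e_i$ abuts two genuinely acute vertices: there $\sinh(r)/\sin\theta$ can exceed $\sinh(L/2)$, so the midpoint no longer suffices, and one must exploit the full strength of the large-edge bound $L>2r$, the constraint $\theta\le\pi/2$, and the freedom in sliding $q_i$ along $e_i$ to guarantee that the entire interior half-disk still embeds in $F$. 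The right-angled configurations, which are precisely the ones singled out in Lemma \ref{maintool}, fall out immediately from the displayed computation, so the remaining work is to control exactly how acute the flanking angles may be while keeping $e_i$ large.
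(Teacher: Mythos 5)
There is a genuine gap, and it is structural: you are trying to prove Proposition \ref{balls} by pure hyperbolic geometry, but the statement is not a geometric fact about acute-angled polygons --- it is an arithmetic fact, and the paper's proof of it runs essentially through the Arithmetic Margulis Lemma \ref{AML} and Lemma \ref{maintool}. The failure is clearest in your treatment of non-adjacent edges. Convexity, simplicity and Andreev's Theorem \ref{Andreev} control only \emph{incidence} (which closures of faces meet), never \emph{distance}: two non-adjacent edges of a convex acute-angled polygon can be disjoint, with $\overline{l_i}\cap\overline{l_j}=\emptyset$, and still lie at arbitrarily small distance from one another (think of a long, thin polygon), which is perfectly consistent with Andreev's theorem. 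Since the radius $r=\e d/4$ grows linearly in $d$, a non-adjacent geodesic $l_j$ crossing the interior half-disk creates no ``extra intersection'' anywhere, so your claimed incompatibility ``with the convex, extra-intersection-free structure of $F$'' is not a valid deduction; nothing metric forbids the configuration. What forbids it in the arithmetic setting is Lemma \ref{AML}: if the disk about $q_i\in e_i$ meets two further edges $e_j,e_k$, each of the corresponding reflections (together with $I_{H_1},\dots,I_{H_{n-2}}$) moves $q_i$ by at most $2r=\e d/2\leq\e_n d$, so the group they generate is virtually solvable, contradicting Lemma \ref{maintool} except in the consecutive right-angled configuration. That is exactly the paper's proof: slide the disk along $e_i$ from the midpoint; the only stopping events are three-edge intersections, which AML plus Lemma \ref{maintool} forbid, and the residual right-angled exceptional case is killed by sliding in the opposite direction, which would force $F$ to be a hyperbolic quadrilateral with four right angles --- impossible.

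Your adjacent-edge analysis has the same problem, and you half-admit it. The identity $\sinh(\operatorname{dist}(q_i,l_{i\pm1}))=\sin\theta\,\sinh(d(q_i,v))$ is correct, but the two constraints $\sinh(t)\geq\sinh(r)/\sin\theta_1$ and $\sinh(L-t)\geq\sinh(r)/\sin\theta_2$ can be jointly unsatisfiable as soon as both flanking angles are strictly acute and $L$ is barely above $2r$ (already $\theta_1=\theta_2=\pi/3$ with $L=2r+0.1$ fails for large $r$); ``large'' gives only $L>\e d/2=2r$, and the angles of a hyperbolic Coxeter polygon can be $\pi/m$ for arbitrarily large $m$, so no amount of sliding freedom rescues the estimate. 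There is no purely metric fix here: the paper resolves this case, too, with the Margulis lemma --- when the sliding disk still touches the adjacent edge and picks up a third edge, the angle at the shared vertex is forced to be strictly less than $\pi/2$, which rules out alternative (2) of Lemma \ref{maintool}, so alternative (1) contradicts Lemma \ref{AML}. In short, the step you flag as ``the main obstacle'' is genuinely fatal to your approach, and both it and the non-adjacent case require precisely the arithmetic input your proposal never invokes.
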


\begin{proof}

Consider a closed ball $B_i$ of radius $\e d /4$ centered at the midpoint $p_i$ of $s_i$. We will slide the ball $B_i$ along $s_i$ (moving the center $p_i$ in the edge $e_i$) until half of $B_i$ is totally contained in $F$ as follows:

Let $x_i, y_i$ be the vertices of $e_i$. If $B_i$ does not intersect another edge of $F$ we are done. Suppose that $B_i$ intersects another edge $e_j  \neq e_i$, we start moving $B_i$ as follows:

\textbf{Case 1}: If $e_j$ is adjacent to $e_i$, say $e_j = e_{i-1}$, we slide $B_i$ towards $y_i$, until either the interior of half of $B_i$ is totally contained in $F$ and we are done, or until at some point $B_i$ intersects three different edges including $e_{i-1}$(tangencies count), so we have that $B_i$ intersects $e_i, e_{i-1}$ and $e_k$, $k \neq i, i-1$, and moreover the angle at $x_i$ is less than 90 degrees. By the Arithmetic Margulis Lemma applied to the center of $B_i$, we have  that $\Gamma_0:= \langle  I_{H_{e_i}},  I_{H_{e_{i-1}}},  I_{H_{e_k}},  I_{H_{1}}, \dots  I_{H_{n-2}}\rangle$ is virtually solvable, contradicting Lemma \ref{maintool}, and so this triple intersection is not allowed.

\textbf{Case 2}: The idea is the same. If $e_j$ is not adjacent to $e_i$, we slide $B_i$ towards $x_i$ until half of $B_i$ is totally contained in $F$, or until half of $B_i$ intersects another edge $e_k$. By the Arithmetic Margulis Lemma applied to the center of $B_i$, we have  that $\Gamma_0:= \langle  I_{H_{e_i}},  I_{H_{e_j}},  I_{H_{e_k}},  I_{H_{1}}, \dots  I_{H_{n-2}}\rangle$ is virtually solvable, and so by Lemma \ref{maintool} $e_j, e_k, e_i$ are consecutive and the angles at $e_j\cap e_k, e_k \cap e_i $ are $\frac{\pi}{2}$, we now slide $B_i$ towards $y_i$ until half of $B_i$ is either totally contained in $P$ or we have that $B_i$ intersects $e_i, e_j$ and another edge $e_l$ (different that $e_k$). In this case, again by the Arithmetic Margulis Lemma and Lemma \ref{maintool}, we have $e_i, e_l, e_j$ are adjacent and perpendicular.  This then implies that $F$ has four edges and all angles $\frac{\pi}{2}$, which is impossible.

\end{proof}

We can now finish the proof of Theorem \ref{main1} by the following volume considerations.

For every large edge $e_i$ consider the half ball $B_i\cap F$, observe that by the Arithmetic Margulis Lemma and Lemma \ref{maintool}, four of these half balls can't intersect non-trivially and so recalling that $F$ have $m$ edges and at least $m/2$ large edges, we have:  $$ 3\vol(F) \geq \sum_{i  \ \text{large}} \vol (B_i\cap F)  \geq \frac{k}{2} \frac{\vol(B_1)}{2} $$

And by elementary hyperbolic geometry $(k-2)\pi \geq \vol(F)$, which implies that $12\pi > vol(B_1)$ and contradicts that $d \geq d_n$ by  equation \ref{dn}.

\end{document}